\documentclass[12pt, reqno]{amsart}
\usepackage{amssymb,latexsym,amsmath,amsfonts}
\usepackage{latexsym}
\usepackage[mathscr]{eucal}

\voffset = -18pt \hoffset = -27pt \textwidth = 5.6in \textheight =
8.7in \numberwithin{equation}{section}

\def ~{\hspace{1mm}}

% THEOREM Environments ---------------------------------------------------
\newtheorem{thm}{Theorem}[section]

\newtheorem{lem}[thm]{Lemma}

\newtheorem{prop}[thm]{Proposition}
\newtheorem{defn}[thm]{Definition}
\newtheorem{rem}[thm]{Remark}

\numberwithin{equation}{section}

\def\textmatrix#1&#2\\#3&#4\\{\bigl({#1 \atop #3}\ {#2 \atop #4}\bigr)}
\def\dispmatrix#1&#2\\#3&#4\\{\left({#1 \atop #3}\ {#2 \atop #4}\right)}

\begin{document}

\title[Functional model]
{A functional model for pure $\Gamma$-contractions}

\author[Bhattacharyya]{Tirthankar Bhattacharyya}
\address[Bhattacharyya]{Department of Mathematics, Indian Institute of Science, Banaglore 560 012}
\email{tirtha@member.ams.org}

\author[Pal]{Sourav Pal}
\address[Pal]{Department of Mathematics, Indian Institute of Science, Banaglore 560 012}
\email{souravmaths@gmail.com}

\maketitle

\begin{abstract}
A pair of commuting operators $(S,P)$ defined on a Hilbert space
$\mathcal H$ for which the closed symmetrized bidisc $$ \Gamma= \{
(z_1+z_2,z_1z_2): |z_1|\leq 1,\, |z_2|\leq 1 \}\subseteq \mathbb
C^2,
$$ is a spectral set is called a $\Gamma$-contraction in the
literature. A $\Gamma$-contraction $(S,P)$ is said to be pure if
$P$ is a pure contraction, i.e, ${P^*}^n \rightarrow 0$ strongly
as $n \rightarrow \infty $. Here we construct a functional model
and produce a set of unitary invariants for a pure
$\Gamma$-contraction. The key ingredient in these constructions is
an operator, which is the unique solution of the operator equation
$$ S-S^*P=D_PXD_P, \textup{ where } X\in \mathcal B(\mathcal D_P),
$$and is called the fundamental operator of the
$\Gamma$-contraction $(S,P)$. We also discuss some important
properties of the fundamental operator.
\end{abstract}

\section{INTRODUCTION AND PRELIMINARIES}
The closed symmetrized bidisc $\Gamma$ is polynomially convex.
Thus, a pair of commuting bounded operators $(S,P)$ is a
$\Gamma$-contraction if and only if $\|p(S,P)\|\leq \|p\|_{\infty,
\Gamma},$ for any polynomial $p$. The $\Gamma$-contractions were
introduced by Agler and Young in \cite{ay-jfa} and have been
thoroughly studied in \cite{ay-ems}, \cite{ay-jot} and
\cite{tirtha-sourav}. An understanding of this family of operator
pairs has led to the solution of a special case of the spectral
Nevanlina-Pick problem(\cite{ay-ieot},\cite{ay-tams}), which is
one of the problems that arise in $H^{\infty}$ control
theory(\cite{BAF}). Also they play a pivotal role in the study of
complex geometry of the set $\Gamma$(see
\cite{ay-blm},\cite{ay-jga}).\\
Spectral sets and complete spectral sets for a bounded operator
$T$ on a Hilbert space $\mathcal H$ or for a tuple of bounded
operators have been well-studied for long and several important
results are known(see
\cite{tirtha-gadadhar},\cite{DMS},\cite{paulsen}). Dilation theory
for an operator or a tuple of operators is well-studied too and
has made some rapid progress in the last twenty years through
Arveson (\cite{arveson}), Popescu
(\cite{popescu1},\cite{popescu2}), Muller and Vasilescu
(\cite{muller}), Pott (\cite{pott}) and others.

Sz.-Nagy and Foias developed the model theory for a
contraction(\cite{nazy}). They found the minimal unitary dilation
of a contraction and it has become a powerful tool for studying an
arbitrary contraction. By von Neumann's inequality, an operator
$T$ is a contraction if and only if $\|p(T)\|\leq \|p\|_{\infty,
\mathbb D}$ for all polynomials $p$, $\mathbb D$ being the open
unit disc in the complex plane. This property itself is very
beautiful and so is the concept of spectral set of an operator. A
compact subset $X$ of $\mathbb C$ is called a spectral set for an
operator $T$ if
$$ \|\pi(T)\|\leq \displaystyle \sup_{z\in X} \|\pi(z)\|=\|\pi\|_{\infty, X}\,, $$
for all rational functions $\pi$ with poles off $X$. If the above
inequality holds for matrix valued rational functions $\pi$, then
$X$ is called a complete spectral set for the operator $T$.
Moreover, $T$ is said to have a normal $\partial X$-dilation if
there is a Hilbert space $\mathcal{K}$ containing $\mathcal{H}$ as
a subspace and a normal operator $N$ on $\mathcal{K}$ with
$\sigma(N)\subseteq
\partial X$ such that $$\pi(T)=P_{\mathcal H}\pi(N)|_{\mathcal H},$$
for all rational functions $\pi$ with poles off $X$. It is a
remarkable consequence of Arveson's extension theorem that $X$ is
a complete spectral set for $T$ if and only if $T$ has a normal
$\partial X$-dilation. Rephrased in this language, the Sz.-Nagy
dilation theorem says that if $\mathbb{D}$ is a spectral set for
$T$ then $T$ has a normal $\partial \mathbb{D}$-dilation. For $T$
to have a normal $\partial X$-dilation it is necessary that  $X$
be a spectral set for $T$. Sufficiency has been investigated for
many domains in $\mathbb{C}$ and several interesting results are
known including success of such a dilation on an
annulus(\cite{agler-ann}) and its failure in triply connected
domains(\cite{ahr},\cite{DM}). When $(T_1,T_2)$ is a commuting
pair of operators for which $\mathbb{D}^2$ is a spectral set,
Ando's theorem provides a simultaneous commuting unitary dilation
of $(T_1,T_2)$. Such classically beautiful concepts led Agler and
Young to the following definitions.

\begin{defn} A commuting pair $(S,P)$ is called a $\Gamma$-{\em unitary}
if $S$ and $P$ are normal operators and the joint spectrum
$\sigma(S,P)$ of $(S,P)$ is contained in the distinguished
boundary $b\Gamma$ defined by
$$ b\Gamma=\{ (z_1+z_2,z_1z_2):\; |z_1|=|z_2|=1 \}\subseteq
\Gamma.$$
\end{defn}
\begin{defn}
A commuting pair $(\tilde{S},\tilde{P})$ on $\mathcal N$ is said
to be a $\Gamma$-{\em unitary extension} of a $\Gamma$-contraction
$(S,P)$ on $\mathcal H$ if $\mathcal H\subseteq \mathcal N$,
$(\tilde{S},\tilde{P})$ is a $\Gamma$-unitary, $\mathcal H$ is a
common invariant subspace of both $\tilde{S}$ and $\tilde{P}$ and
$\tilde{S}|_{\mathcal H}=S, \tilde{P}|_{\mathcal H}=P$.
\end{defn}

\begin{defn}
A commuting pair $(S,P)$ is called a $\Gamma$-{\em isometry} if it
has a $\Gamma$-unitary extension. A commuting pair $(S,P)$ is a
$\Gamma$-{\em co-isometry} if $(S^*,P^*)$ is a $\Gamma$-isometry.
\end{defn}
\begin{defn}
Let $(S,P)$ be a $\Gamma$-contraction on $\mathcal H$. A pair of
commuting operators $(T,V)$ acting on a Hilbert space $\mathcal N
\supseteq \mathcal H$ is called a $\Gamma$-{\em isometric
dilation} of $(S,P)$ if $(T,V)$ is a $\Gamma$-isometry, $\mathcal
H$ is a co-invariant subspace of both $T$ and $V$ and
$T^*|_{\mathcal H}=S^*, V^*|_{\mathcal H}=P^*$. Moreover, the
dilation will be called minimal if $$\mathcal
N=\overline{\textup{span}} \{V^nh:h\in\mathcal H \mbox{ and
}n=0,1,2,\dots\}.$$ Thus $(T,V)$ is a $\Gamma$-isometric dilation
of a $\Gamma$-contraction $(S,P)$ if and only if $(T^*,V^*)$ is a
$\Gamma$-co-isometric extension of $(S^*,P^*)$.
\end{defn}
A $\Gamma$-contraction $(S,P)$ acting on a Hilbert space $\mathcal
H$ is said to be pure if $P$ is a pure contraction, i.e,
${P^*}^n\rightarrow 0$ strongly as $n\rightarrow \infty$. The
class of pure $\Gamma$-contractions plays a pivotal role in
deciphering the structure of a class of $\Gamma$-contractions. In
\cite{ay-jot} (Theorem 2.8), Agler and Young proved that every
$\Gamma$-contraction $(S,P)$ acting on a Hilbert space $\mathcal
H$ can be decomposed into two parts $(S_1,P_1)$ and $(S_2,P_2)$ of
which $(S_1,P_1)$ is a $\Gamma$-unitary and $(S_2,P_2)$ is a
$\Gamma$-contraction with $P$ being a completely non-unitary
contraction. This shows an analogy with the decomposition of a
single contraction. Indeed, if $\mathcal H_1$ is the maximal
subspace of $\mathcal H$ which reduces $P$ and on which $P$ is
unitary, then $\mathcal H_1$ reduces $S$ as well and $(S_1,P_1)$
is same as $(S|_{\mathcal H_1},P|_{\mathcal H_1})$. Also both $S$
and $P$ are reduced by the subspace $\mathcal H_2$, the
orthocomplement of $\mathcal H_1$ in $\mathcal H$, and $(S_2,P_2)$
is same as $(S|_{\mathcal H_2},P|_{\mathcal H_2})$. The functional
model and unitary invariants we produce here give a good vision of
those $\Gamma$-contractions $(S,P)$ for which the part $(S_2,P_2)$
described above is a pure $\Gamma$-contraction.

The program that Sz.-Nagy and Foias carried out for a contraction
had two parts. The dilation was the first part which was followed
by a functional model and a complete unitary invariant. For a
$\Gamma$-contraction, the first part of that program was carried
out in \cite{ay-jot} by Agler and Young. The second half is the
content of this article.

For a contraction $P$ defined on a Hilbert space $\mathcal H$, let
$\Lambda_P$ be the set of all complex numbers for which the
operator $I-zP^*$ is invertible. For $z\in \Lambda_P$, the
characteristic function of $P$ is defined as
\begin{eqnarray}\label{e0} \Theta_P(z)=[-P+zD_{P^*}(I-zP^*)^{-1}D_P]|_{\mathcal D_P}.
\end{eqnarray} Here the operators $D_P$ and $D_{P^*}$ are the
defect operators $(I-P^*P)^{1/2}$ and $(I-PP^*)^{1/2}$
respectively. By virtue of the relation $PD_P=D_{P^*}P$ (section
I.3 of \cite{nazy}), $\Theta_P(z)$ maps $\mathcal
D_P=\overline{\textup{Ran}}D_P$ into $\mathcal
D_{P^*}=\overline{\textup{Ran}}D_{P^*}$ for every $z$ in
$\Lambda_P$.

For a pair of commuting bounded operators $S,P$ on a Hilbert space
$\mathcal H$ with $\|P\|\leq 1$, we introduced in
\cite{tirtha-sourav}, the notion of the fundamental equation. For
the pair $S,P$ it is defined as \begin{eqnarray}\label{e1}
S-S^*P=D_PXD_P,\quad X\in \mathcal B(\mathcal D_P),
\end{eqnarray} and the same for the pair $S^*,P^*$ is
\begin{eqnarray}\label{e2} S^*-SP^*=D_{P^*}YD_{P^*},\quad Y\in \mathcal B(\mathcal D_{P^*}).\end{eqnarray}
In the same paper we also proved the existence and uniqueness of
solutions of such equations when $(S,P)$ is a $\Gamma$-contraction
(Theorem 4.2 of \cite{tirtha-sourav}). The unique solution was
named the fundamental operator of the $\Gamma$-contraction because
it led us to a new characterization for $\Gamma$-contractions
(Theorem 4.4 of \cite{tirtha-sourav}).

In section 2, we discuss some interesting properties of the
fundamental operator. In section 3, we construct a functional
model for a pure $\Gamma$-contraction $(S,P)$ and this is the main
content of this paper. The fundamental operator $F_*$ of
$(S^*,P^*)$ is taken as the key ingredient in that construction.
In section 4, we produce a set of unitary invariants for pure
$\Gamma$-contractions. For the unitary equivalence of two pure
$\Gamma$-contractions $(S,P)$ and $(S_1,P_1)$ on Hilbert spaces
$\mathcal H$ and $\mathcal H_1$ respectively, we produce here a
set of unitary invariants which consists of two things mainly. The
first one demands the coincidence of the characteristic functions
of $P$ and $P_1$. The second condition is the unitary equivalence
of the fundamental operators ${F_*}$ and ${F_*}_1$ of $(S^*,P^*)$
and $(S_1^*,P_1^*)$ by the same unitary from $\mathcal D_{P^*}$ to
$\mathcal D_{P_1^*}$ that is involved in establishing the
coincidence of the characteristic functions of $P$ and $P_1$.

\section{AUTOMORPHISMS AND THE FUNDAMENTAL OPERATOR}

For a $\Gamma$-contraction $(S,P)$ we find out an explicit form of
the fundamental operator of $\tau (S,P)$, where $\tau$ is an
automorphism of the open symmetrized bidisc
$$ G=\{ (z_1+z_2,z_1z_2):\; | z_1|<1, |z_2| <1 \}. $$ It is
well-known, see \cite{ay-caot} and \cite{jarnicki}, that any
automorphism $\tau$ of $G$ is given as
follows:\begin{eqnarray}\label{f1}
\tau(z_1+z_2,z_1z_2)=\tau_m(z_1+z_2,z_1z_2)=(m(z_1)+m(z_2),m(z_1)m(z_2)),\;
z_1,z_2 \in\mathbb D,\end{eqnarray} where $m$ is an automorphism
of the disc $\mathbb D$. Recall that the joint spectrum
$\sigma(S,P)$ of a $\Gamma$-contraction $(S,P)$ is contained in
$\Gamma$. Thus if $\tau$ is a $\mathbb C^2$-valued holomorphic map
in a neighbourhood $\mathbf N(\Gamma)$ of $\Gamma$ mapping
$\Gamma$ into itself, then by functional calculus (see
\cite{taylor3}), $(S_{\tau},P_{\tau}):=\tau(S,P)$ is well defined
as a pair of commuting bounded operators.
\begin{lem}
For $(S,P)$ and $\tau$ as above, $(S_{\tau},P_{\tau})$ is a
$\Gamma$-contraction.
\end{lem}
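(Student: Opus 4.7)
The plan is to use the fact that $\Gamma$ is polynomially convex together with the composition property of the Taylor functional calculus. First, note that since $S$ and $P$ commute and $\tau$ is holomorphic on $\mathbf{N}(\Gamma) \supseteq \sigma(S,P)$, the operators $S_\tau = \tau_1(S,P)$ and $P_\tau = \tau_2(S,P)$ are well-defined bounded operators that commute, because Taylor's functional calculus is an algebra homomorphism into the commutant of $\{S,P\}$. So the only real content is the spectral-set inequality.

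For an arbitrary polynomial $p$ in two variables, I would invoke the composition rule of the Taylor functional calculus to write
\begin{equation*}
p(S_\tau,P_\tau) \;=\; p\bigl(\tau(S,P)\bigr) \;=\; (p\circ\tau)(S,P),
\end{equation*}
where $p\circ\tau$ is holomorphic on the neighbourhood $\mathbf{N}(\Gamma)$. Since $\Gamma$ is polynomially convex, by the Oka-Weil approximation theorem there exists a sequence of polynomials $q_n$ with $q_n \to p\circ\tau$ uniformly on $\Gamma$ (in fact on a slightly smaller neighbourhood, which is what continuity of the functional calculus requires). By continuity of the Taylor functional calculus on holomorphic functions, $q_n(S,P)\to (p\circ\tau)(S,P)$ in operator norm.

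Now the spectral-set hypothesis for $(S,P)$ gives $\|q_n(S,P)\|\le \|q_n\|_{\infty,\Gamma}$ for every $n$. Passing to the limit,
\begin{equation*}
\|p(S_\tau,P_\tau)\| \;=\; \|(p\circ\tau)(S,P)\| \;\le\; \|p\circ\tau\|_{\infty,\Gamma}.
\end{equation*}
The hypothesis $\tau(\Gamma)\subseteq\Gamma$ yields $\|p\circ\tau\|_{\infty,\Gamma}=\sup_{z\in\Gamma}|p(\tau(z))|\le \sup_{w\in\Gamma}|p(w)|=\|p\|_{\infty,\Gamma}$, and hence $\|p(S_\tau,P_\tau)\|\le \|p\|_{\infty,\Gamma}$. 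Since this holds for every polynomial and $\Gamma$ is polynomially convex, $(S_\tau,P_\tau)$ is a $\Gamma$-contraction.

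The main technical point, and the step requiring the most care, is the justification of the composition identity $p(\tau(S,P))=(p\circ\tau)(S,P)$ and the use of Oka–Weil: both rely on having $\sigma(S,P)\subseteq\Gamma$ so that $\tau$ and $p\circ\tau$ sit inside the holomorphic functional calculus for the commuting pair $(S,P)$. Everything else is a clean transfer of the sup-norm estimate from $\Gamma$ to itself along $\tau$.
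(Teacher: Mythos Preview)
Your argument is correct and follows the same route as the paper: write $p(S_\tau,P_\tau)=(p\circ\tau)(S,P)$, bound this by $\|p\circ\tau\|_{\infty,\Gamma}$, and then use $\tau(\Gamma)\subseteq\Gamma$. The paper compresses this into a single displayed chain of inequalities and does not spell out the Oka--Weil step you include to justify $\|(p\circ\tau)(S,P)\|\le\|p\circ\tau\|_{\infty,\Gamma}$ for the non-polynomial $p\circ\tau$; your added care there is appropriate.
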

\begin{proof}
To show that $\Gamma$ is a spectral set of $(S_{\tau},P_{\tau})$.
Let $f$ be a polynomial over $\mathbb C$ in two variables. Then
$$\|f(S_{\tau},P_{\tau})\|=\|f\circ \tau(S,P)\|\leq\|f\circ \tau \|_{\infty,\Gamma}=\sup_{z\in\Gamma}|f(\tau(z))|\leq
\|f\|_{\infty,\Gamma}\,,$$ since $\tau(z)\in\Gamma$ for all
$z\in\Gamma$ and hence $(S_{\tau},P_{\tau})$ is a
$\Gamma$-contraction.
\end{proof}
The following is the main result of this section.
\begin{thm}
Let $(S,P)$ be a $\Gamma$-contraction defined on a Hilbert space
$\mathcal{H}$ and let $\tau$ be an automorphism of $G$. Let
$\tau=\tau_m$ as in (\ref{f1}) and $m$ be given by $m(z)=\beta
\frac{z-a}{1-\bar{a}z}$ for some $a\in\mathbb D$ and
$\beta\in\mathbb T$. Let $F$ and $F_{\tau}$ be the fundamental
operators of $(S,P)$ and $(S_{\tau},P_{\tau})$ respectively. Then
there is a unitary $U:\mathcal{D}_{P_{\tau}}\rightarrow
\mathcal{D}_P$ such that
$$F_{\tau}=U^*((1+|a|^2)-\bar{a}F-aF^*)^{-\frac{1}{2}}\beta
(F+a^2F^*-2a)((1+|a|^2)-\bar{a}F-aF^*)^{-\frac{1}{2}}U.$$
\end{thm}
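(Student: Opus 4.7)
The plan is to reduce the theorem to a direct algebraic reduction based on the fundamental equation for $(S,P)$, together with the uniqueness clause of Theorem 4.2 of \cite{tirtha-sourav}. The first step is to make $S_\tau$ and $P_\tau$ explicit. Computing $m(z_1)+m(z_2)$ and $m(z_1)m(z_2)$ from (\ref{f1}) in the elementary symmetric variables $s=z_1+z_2$, $p=z_1z_2$ and applying Taylor's joint functional calculus (valid since $\sigma(S,P)\subseteq\Gamma$ and the denominator $1-\bar a s+\bar a^{2}p$ is nonvanishing on $\Gamma$) gives
\[
S_\tau=\beta\,AM^{-1},\qquad P_\tau=\beta^{2}\,BM^{-1},
\]
where $A=(1+|a|^{2})S-2\bar a P-2aI$, $B=P-aS+a^{2}I$ and $M=I-\bar a S+\bar a^{2}P$ is invertible.

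The second step is to rewrite the two ingredients of the fundamental equation for $(S_\tau,P_\tau)$ purely in terms of $F$, $F^{*}$ and $D_P$. Starting from
\[
S_\tau-S_\tau^{*}P_\tau=\beta(M^{-1})^{*}(M^{*}A-A^{*}B)M^{-1},\qquad I-P_\tau^{*}P_\tau=(M^{-1})^{*}(M^{*}M-B^{*}B)M^{-1},
\]
I would expand the middle factors, exploit the commutation $SP=PS$, and substitute \emph{both} the fundamental equation $S-S^{*}P=D_{P}FD_{P}$ and its adjoint $S^{*}-P^{*}S=D_{P}F^{*}D_{P}$. Absorbing the residual $I-P^{*}P$ as $D_{P}^{2}$, the middle factors collapse to
\[
M^{*}A-A^{*}B=(1-|a|^{2})D_{P}\bigl(F+a^{2}F^{*}-2aI_{\mathcal D_P}\bigr)D_{P},
\]
\[
M^{*}M-B^{*}B=(1-|a|^{2})D_{P}\,G\,D_{P},\qquad G:=(1+|a|^{2})I_{\mathcal D_P}-\bar a F-aF^{*}.
\]
The numerical-radius bound $\omega(F)\le 1$ coming from the characterization of fundamental operators in Theorem 4.4 of \cite{tirtha-sourav} yields $\langle Gx,x\rangle\ge(1+|a|^{2})-2|a|=(1-|a|)^{2}>0$ for every unit $x\in\mathcal D_P$, so $G$ is strictly positive on $\mathcal D_P$ and $G^{\pm 1/2}$ are well defined.

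For the last step, set $X:=\sqrt{1-|a|^{2}}\,G^{1/2}D_{P}M^{-1}:\mathcal H\to\mathcal D_P$. The identity for $I-P_\tau^{*}P_\tau$ becomes $X^{*}X=D_{P_\tau}^{2}$, so its polar decomposition $X=V D_{P_\tau}$ supplies a partial isometry $V$ with initial space $\mathcal D_{P_\tau}$; because $M^{-1}$ is a bijection of $\mathcal H$ and $G^{1/2}$ is injective on $\mathcal D_P$, the final space of $V$ is all of $\mathcal D_P$, and the restriction $U:=V|_{\mathcal D_{P_\tau}}:\mathcal D_{P_\tau}\to\mathcal D_P$ is unitary. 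Inserting $G^{1/2}G^{-1/2}$ around $F+a^{2}F^{*}-2aI_{\mathcal D_P}$ in the identity for $S_\tau-S_\tau^{*}P_\tau$ and replacing $\sqrt{1-|a|^{2}}\,G^{1/2}D_{P}M^{-1}$ by $UD_{P_\tau}$ yields
\[
S_\tau-S_\tau^{*}P_\tau=D_{P_\tau}\bigl[U^{*}G^{-1/2}\beta(F+a^{2}F^{*}-2aI_{\mathcal D_P})G^{-1/2}U\bigr]D_{P_\tau},
\]
and the uniqueness of the solution of the fundamental equation identifies the bracket with $F_\tau$, which is exactly the stated formula. The main obstacle is the algebraic collapse in the second step: those clean identities are not visible without fully exploiting both the fundamental equation and its adjoint together with the commutation $SP=PS$. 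Once they are in hand, the polar-decomposition-plus-uniqueness finish is routine.
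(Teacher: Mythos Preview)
Your proposal is correct and follows essentially the same route as the paper: explicit computation of $(S_\tau,P_\tau)$, reduction of both $I-P_\tau^{*}P_\tau$ and $S_\tau-S_\tau^{*}P_\tau$ to the form $(M^{-1})^{*}D_P(\cdot)D_PM^{-1}$ via the fundamental equation and its adjoint, invertibility of $G=(1+|a|^2)-\bar aF-aF^{*}$ from $\omega(F)\le1$, and then the polar factorization $X=UD_{P_\tau}$ combined with uniqueness of the fundamental operator. The only cosmetic difference is that the paper defines $U$ directly by $D_{P_\tau}h\mapsto Xh$ rather than invoking polar decomposition, but this is the same construction.
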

\begin{proof}
We have
\begin{align*}
\tau(s,p)=\tau(z_1+z_2,z_1z_2)&=(\beta(\frac{z_1-a}{1-\bar{a}z_1}+\frac{z_2-a}{1-\bar{a}z_2}),\beta^2
\frac{(z_1-a)(z_2-a)}{(1-\bar{a}z_1)(1-\bar{a}z_2)})\\&=(\beta
\frac{(z_1+z_2)-2\bar{a}z_1z_2+|a|^2(z_1+z_2)-2a}{1-\bar{a}(z_1+z_2)+\bar{a}^2z_1z_2},
\beta^2
\frac{z_1z_2-a(z_1+z_2)+a^2}{1-\bar{a}(z_1+z_2)+\bar{a}^2z_1z_2})\\&=(\beta\frac{(1+|a|^2)s-2\bar{a}p-2a}{1-\bar{a}s+\bar{a}^2p},
\beta^2\frac{p-as+a^2}{1-\bar{a}s+\bar{a}^2p}).
\end{align*}
It is obvious that $\tau$ can be defined on the open set
$\Gamma_a=\{ (z_1+z_2,z_1z_2):\;
|z_1|<\frac{1}{|a|}\,,|z_2|<\frac{1}{|a|} \}$, which contains
$\Gamma$. Clearly
$$(S_{\tau},P_{\tau})=\tau(S,P)=(\beta((1+|a|^2)S-2\bar{a}P-2a)(I-\bar{a}S+\bar{a}^2P)^{-1},\beta^2(P-aS+a^2)(I-\bar{a}S+\bar{a}^2P)^{-1}).$$
Here
\begin{align*}
{D}^2_{P_{\tau}}=&(I-P_{\tau}^*P_{\tau})\\
=&I-(I-aS^*+a^2P^*)^{-1}(P^*-\bar{a}S^*+\bar{a}^2)(P-aS+a^2)(I-\bar{a}S+\bar{a}^2P)^{-1}\\
=&(I-aS^*+a^2P^*)^{-1}[(I-aS^*+a^2P^*)(I-\bar{a}S+\bar{a}^2P)-\\&(P^*-\bar{a}S^*+\bar{a}^2)(P-aS+a^2)](I-\bar{a}S+\bar{a}^2P)^{-1}\\
=&(I-aS^*+a^2P^*)^{-1}[-\bar{a}(1-|a|^2)(S-S^*P)-a(1-|a|^2)(S^*-P^*S)\\&
(1-|a|^4)(I-P^*P)](I-\bar{a}S+\bar{a}^2P)^{-1}\\
=&(1-|a|^2)(I-aS^*+a^2P^*)^{-1}[(1+|a|^2)(I-P^*P)-\\& \bar{a}(S-S^*P)-a(S^*-P^*S)](I-\bar{a}S+\bar{a}^2P)^{-1}\\
=&(1-|a|^2)(I-aS^*+a^2P^*)^{-1}[(1+|a|^2){D}_P^2-\bar{a}{D}_PF{D}_P-a{D}_PF^*{D}_P](I-\bar{a}S+\bar{a}^2P)^{-1},\\&
\mbox{since } S-S^*P={D}_PF{D}_P\\
=&(1-|a|^2)(I-aS^*+a^2P^*)^{-1}{D}_P[(1+|a|^2)-\bar{a}F-aF^*]{D}_P(I-\bar{a}S+\bar{a}^2P)^{-1}
\end{align*}
Now we show that the operator $(1+|a|^2)-\bar{a}F-aF^*$ defined on
$\mathcal{D}_P$ is invertible. Since
$F\in\mathcal{B}(\mathcal{D}_P)$, it is enough to show that
$(1+|a|^2)-\bar{a}F-aF^*$ is bounded below, i.e,
$$\inf_{\|x\|\leq1}\langle ((1+|a|^2)-\bar{a}F-aF^*)x,x \rangle>0,$$
or equivalently $$\sup_{\|x\|\leq1}|\bar{a}\langle Fx,x
\rangle+a\langle F^*x,x \rangle|<(1+|a|^2).$$ Since the numerical
radius of $F$ is not greater than $1$,
$$\sup_{\|x\|\leq1}|\bar{a}\langle Fx,x \rangle+a\langle F^*x,x
\rangle|\leq2|a|<(1+|a|^2)$$ as $1+|a|^2-2|a|=(1-|a|)^2>0$ for
$a\in\mathbb{D}$ and consequently the operator
$(1+|a|^2-\bar{a}F-aF^*)$ is invertible.\\
Let
$X=(1-|a|^2)^{\frac{1}{2}}[(1+|a|^2)-\bar{a}F-aF^*]^{\frac{1}{2}}{D}_P(I-\bar{a}S+\bar{a}^2P^*)^{-1}.$
Then $X$ is an operator from $\mathcal H$ to $\mathcal {D}_P$.
Also ${D}_{P_{\tau}}^2=X^*X$ and
$\overline{\textup{Ran}}X=\mathcal{D}_P$ as
$(1+|a|^2)-\bar{a}F-aF^*$ is invertible. Now define
\begin{align*}
U:&\mathcal{D}_{P_{\tau}}\rightarrow
\overline{\textup{Ran}}X=\mathcal{D}_P
\\& {D}_{P_{\tau}}h\mapsto Xh.
\end{align*}
Clearly $U$ is onto. Moreover,
$$\|U{D}_{P_{\tau}}h\|^2=\|Xh\|^2=\langle X^*Xh,h
\rangle=\langle {D}_{P_{\tau}}^2h,h
\rangle=\|{D}_{P_{\tau}}h\|^2.$$ So $U$ is a surjective isometry
i.e, a unitary. Also {\small\begin{align*}
S_{\tau}-S^*_{\tau}P_{\tau}=&\beta[((1+|a|^2)S-2\bar{a}P-2a)(I-\bar{a}S+\bar{a}^2P)^{-1}-\\&
(I-aS^*+a^2P^*)^{-1}((1+|a|^2)S^*-2aP^*-2\bar{a})
(P-aS+a^2)(I-\bar{a}S+\bar{a}^2P)^{-1})]\\=&
(I-aS^*+a^2P^*)^{-1}\beta[(I-aS^*+a^2P^*)((1+|a|^2)S-2\bar{a}P-2a)-
\\&((1+|a|^2)S^*-2aP^*-2\bar{a})(P-aS+a^2)](I-\bar{a}S+\bar{a}^2P)^{-1}\\=&
(I-aS^*+a^2P^*)^{-1}\beta[(1-|a|^2)(S-S^*P)+2a^2(S^*-P^*S)-a^2(1+|a|^2)(S^*-P^*S)\\&
-2a(I-P^*P)+2a|a|^2(I-P^*P)](I-\bar{a}S+\bar{a}^2P)^{-1}\\=&
(I-aS^*+a^2P^*)^{-1}\beta[(1-|a|^2)(S-S^*P)+a^2(1-|a|^2)(S^*-P^*S)-\\&
2a(1-|a|^2)(I-P^*P)](I-\bar{a}S+\bar{a}^2P)^{-1}\\=&
(1-|a|^2)(I-aS^*+a^2P^*)^{-1}\beta[(S-S^*P)+a^2(S^*-P^*S)-2a(I-P^*P)](I-\bar{a}S+\bar{a}^2P)^{-1}\\=&
(1-|a|^2)(I-aS^*+a^2P^*)^{-1}\beta[{D}_PF{D}_P+a^2{D}_PF^*{D}_P-2a{D}_P^2](I-\bar{a}S+\bar{a}^2P)^{-1},\\&
\mbox{since } S-S^*P={D}_PF{D}_P\\=&
(1-|a|^2)(I-aS^*+a^2P^*)^{-1}\beta{D}_P[F+a^2F^*-2a]{D}_P(I-\bar{a}S+\bar{a}^2P)^{-1}\\=&
X^*[((1+|a|^2)-\bar{a}F-aF^*)^{-\frac{1}{2}}\beta(F+a^2F^*-2a)((1+|a|^2)-\bar{a}F-aF^*)^{-\frac{1}{2}}]X\\=&
{D}_{P_{\tau}}U^*[((1+|a|^2)-\bar{a}F-aF^*)^{-\frac{1}{2}}\beta(F+a^2F^*-2a)((1+|a|^2)-\bar{a}F-aF^*)^{-\frac{1}{2}}]U{D}_{P_{\tau}}.
\end{align*}}
Again since
$S_{\tau}-S_{\tau}^*P_{\tau}={D}_{P_{\tau}}F_{\tau}{D}_{P_{\tau}}$
and $F_{\tau}$ is unique, we have
$$F_{\tau}=U^*((1+|a|^2)-\bar{a}F-aF^*)^{-\frac{1}{2}}\beta
(F+a^2F^*-2a)((1+|a|^2)-\bar{a}F-aF^*)^{-\frac{1}{2}}U.$$
\end{proof}

Here is an interesting result which relates the fundamental
operator of a $\Gamma$-contraction $(S,P)$ with that of
$(S^*,P^*)$.
\begin{prop}
Let $(S,P)$ be a $\Gamma$-contraction on $\mathcal H$ and let $F,
{F_*}$ be the fundamental operators of $(S,P)$ and $(S^*,P^*)$
respectively. Then $PF={F_*}^*P|_{\mathcal D_P}$.
\end{prop}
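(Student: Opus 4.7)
The plan is to derive the identity $PF=F_*^*P|_{\mathcal{D}_P}$ by manipulating the two fundamental equations $S-S^*P=D_PFD_P$ and $S^*-SP^*=D_{P^*}F_*D_{P^*}$, reducing everything to a single equation of the form $D_{P^*}\bigl(PF-F_*^*P\bigr)D_P=0$, and then cancelling $D_{P^*}$ and $D_P$ using density and the fact that $PD_P=D_{P^*}P$.

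First I would left-multiply the first fundamental equation by $P$ and use $PD_P=D_{P^*}P$ to rewrite the left-hand side, giving
\[
D_{P^*}PFD_P = PS - PS^*P.
\]
Then I would take adjoints of the second fundamental equation to obtain $S-PS^*=D_{P^*}F_*^*D_{P^*}$, right-multiply by $P$, and use $D_{P^*}P=PD_P$ on the right-hand side of the left side to produce
\[
D_{P^*}F_*^*PD_P = SP - PS^*P.
\]
Using $SP=PS$ (since $(S,P)$ commutes), subtracting the two identities yields
\[
D_{P^*}\bigl(PF - F_*^*P\bigr)D_P = 0.
\]

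Next I would remove the outer $D_P$: since $\overline{\operatorname{Ran}}D_P=\mathcal{D}_P$, the above implies $D_{P^*}(PF-F_*^*P)x=0$ for every $x\in\mathcal{D}_P$. I would then check that $(PF-F_*^*P)x$ lies in $\mathcal{D}_{P^*}$; indeed $Fx\in\mathcal{D}_P$ and $PD_P=D_{P^*}P$ gives $P(\mathcal{D}_P)\subseteq\mathcal{D}_{P^*}$, so both $PFx$ and $F_*^*Px$ lie in $\mathcal{D}_{P^*}$. Finally, the self-adjoint operator $D_{P^*}$ satisfies $\ker D_{P^*}=(\operatorname{Ran}D_{P^*})^{\perp}=\mathcal{D}_{P^*}^{\perp}$, and hence its restriction to $\mathcal{D}_{P^*}$ is injective. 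Applied to $(PF-F_*^*P)x\in\mathcal{D}_{P^*}$, this forces $PFx=F_*^*Px$ for all $x\in\mathcal{D}_P$, which is the desired identity.

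The whole argument is essentially algebraic bookkeeping; the one spot that requires a moment of care is the last step, where one must verify that $PF-F_*^*P$ actually lands in $\mathcal{D}_{P^*}$ before invoking injectivity of $D_{P^*}$ there. Once the intertwining relation $PD_P=D_{P^*}P$ is used consistently, this follows immediately, so I do not expect any substantial obstacle.
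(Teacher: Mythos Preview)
Your proof is correct and follows essentially the same route as the paper: both use the fundamental equations together with $PD_P=D_{P^*}P$ and $SP=PS$ to produce the identity $D_{P^*}(PF-F_*^*P)D_P=0$, then conclude by density. The only cosmetic difference is that the paper carries this out by pairing against test vectors $D_Ph$ and $D_{P^*}h'$ (so the density/injectivity step is implicit), whereas you write the operator identity directly and then explicitly peel off $D_P$ and $D_{P^*}$.
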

\begin{proof}
Since $F\in \mathcal B(\mathcal D_P)$ and $F_*\in \mathcal
B(\mathcal D_{P^*})$, both $PF$ and ${F_*}^*P|_{\mathcal D_P}$ are
in $\mathcal B(\mathcal D_P,\mathcal D_{P^*})$. For $D_Ph \in
\mathcal D_P$ and $D_{P^*}h^{\prime}\in \mathcal D_{P^*}$, we have
\begin{align*}
\langle PFD_Ph,D_{P^*}h^{\prime} \rangle &=\langle
D_{P^*}PFD_Ph,h^{\prime} \rangle \\& =\langle
PD_PFD_Ph,h^{\prime}\rangle, \quad \textup{since }PD_P=D_{P^*}P \\
&=\langle P(S-S^*P)h,h^{\prime} \rangle,\quad \textup{since }
S-S^*P=D_PFD_P \\&=\langle (PS-PS^*P)h,h^{\prime} \rangle
\\&=\langle (SP-PS^*P)h,h^{\prime} \rangle \\&=\langle (S-PS^*)Ph,h^{\prime}
\rangle\\&=\langle D_{P^*}{F_*}^*D_{P^*}Ph,h^{\prime} \rangle,
\quad \textup{since } S^*-SP^*=D_{P^*}{F_*}D_{P^*}
\\&=\langle {F_*}^*PD_Ph,D_{P^*}h^{\prime} \rangle.
\end{align*}
Hence $PF={F_*}^*P|_{\mathcal D_P}$.
\end{proof}

 \section{FUNCTIONAL MODEL}

 In \cite{nazy}, Sz.-Nagy and Foias showed that every
pure contraction $P$ defined on a Hilbert space $\mathcal H$ is
unitarily equivalent to the operator $\mathbb P=P_{\mathbb
H_P}(M_z\otimes I)|_{\mathcal D_{P^*}}$ on the Hilbert space
$\mathbb H_P=(H^2(\mathbb D)\otimes \mathcal D_{P^*}) \ominus
M_{\Theta_P}(H^2(\mathbb D)\otimes \mathcal D_P)$, where $M_z$ is
the multiplication operator on $H^2(\mathbb D)$ and $M_{\Theta_P}$
is the multiplication operator from $H^2(\mathbb D)\otimes
\mathcal D_P$ into $H^2(\mathbb D)\otimes \mathcal D_{P^*}$
corresponding to the multiplier $\Theta_P$, which is the
characteristic function of $P$ defined in section 1. This is known
as Sz.Nagy-Foias model for a pure contraction. Here analogously we
produce a model for a pure $\Gamma$-contraction.
\begin{thm}\label{modelthm}
Every pure $\Gamma$-contraction $(S,P)$ defined on a Hilbert space
$\mathcal H$ is unitarily equivalent to the pair $(S_1,P_1)$ on
the Hilbert space $\mathbb H_P=(H^2(\mathbb D)\otimes \mathcal
D_{P^*})\ominus M_{\Theta_P}(H^2(\mathbb D)\otimes \mathcal D_P)$
defined as $S_1=P_{\mathbb H_P}(I\otimes {F_*}^*+M_z\otimes
{F_*})|_{\mathbb H_P}$ and $P_1=P_{\mathbb H_P}(M_z\otimes
I)|_{\mathbb H_P}.$
\end{thm}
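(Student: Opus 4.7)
The plan is to use the Sz.-Nagy--Foias model of the pure contraction $P$ to produce a canonical unitary $W\colon\mathcal H\to\mathbb H_P$ and then check that the \emph{same} $W$ carries $S$ to $S_1$. The standard isometry
\[
W\colon\mathcal H\to H^2(\mathbb D)\otimes\mathcal D_{P^*},\qquad Wh=\sum_{n=0}^{\infty}z^n\otimes D_{P^*}P^{*n}h,
\]
is, by pureness of $P$, a unitary onto $\mathbb H_P$ satisfying $WP=P_1W$. This disposes of the $P$-part of the statement.

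For the $S$-part, since $WSh\in\mathbb H_P$ and $\{Wh':h'\in\mathcal H\}=\mathbb H_P$, the identity $WS=S_1W$ is equivalent to
\[
\langle Sh,h'\rangle=\langle(I\otimes F_*^{*}+M_z\otimes F_*)Wh,Wh'\rangle\quad\text{for all }h,h'\in\mathcal H.
\]
I would expand the right-hand side as a double sum over $n$, recognising $F_*^*$ (resp.\ $F_*$) as sandwiched between two copies of $D_{P^*}$. The two fundamental equations
\[
D_{P^*}F_*^{*}D_{P^*}=S-PS^{*},\qquad D_{P^*}F_*D_{P^*}=S^{*}-SP^{*}
\]
then rewrite the right-hand side as
\[
\sum_{n\ge 0}\bigl[\langle(S-PS^{*})P^{*n}h,P^{*n}h'\rangle+\langle(S^{*}-SP^{*})P^{*n}h,P^{*(n+1)}h'\rangle\bigr].
\]

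The heart of the proof is a short algebraic cancellation followed by a telescoping argument. Moving $P$ across the inner product gives $\langle PS^{*}P^{*n}h,P^{*n}h'\rangle=\langle S^{*}P^{*n}h,P^{*(n+1)}h'\rangle$, while trivially $\langle SP^{*}P^{*n}h,P^{*(n+1)}h'\rangle=\langle SP^{*(n+1)}h,P^{*(n+1)}h'\rangle$. These two identities force the mixed terms to cancel, leaving
\[
\sum_{n\ge 0}\bigl[\langle SP^{*n}h,P^{*n}h'\rangle-\langle SP^{*(n+1)}h,P^{*(n+1)}h'\rangle\bigr].
\]
Since $P$ is pure, $P^{*N}h\to 0$ and $P^{*N}h'\to 0$ strongly, so the tail vanishes and the telescoping series collapses to $\langle Sh,h'\rangle$, which is exactly what was needed.

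The main obstacle I anticipate is spotting the right pairing: one must apply both fundamental equations in tandem so that the four resulting inner products split into two identical cross terms (which cancel) and one telescoping series (which collapses by pureness). Once this combinatorics is seen, the computation is short, and only the Sz.-Nagy--Foias bookkeeping from \cite{nazy} is required; no direct manipulation of the characteristic function $\Theta_P$ beyond what already defines $\mathbb H_P$ is needed.
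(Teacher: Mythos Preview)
Your argument is correct. You use the same isometry $W$ as the paper and cite the Sz.-Nagy--Foias model for the $P$-part exactly as the paper does. For the $S$-part, however, you take a different computational route: you test $WS=S_1W$ against inner products with $Wh'$, expand the double series, and obtain the result by a termwise cancellation followed by a telescoping sum whose tail vanishes by pureness of $P$. The paper instead works with the adjoint $L=W^*$ and reduces the intertwining relation $LT=SL$ (where $T=I\otimes F_*^{*}+M_z\otimes F_*$) to the single operator identity
\[
D_{P^*}F_*^{*}+PD_{P^*}F_*=SD_{P^*},
\]
which it verifies in one line by multiplying on the right by $D_{P^*}$ and invoking the fundamental equation once. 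What the paper's route buys is slightly more: from $LT=SL$ one gets $T^*L^*=L^*S^*$, so $\mathbb H_P$ is \emph{co-invariant} under $T$ and $(T,M_z\otimes I)$ is exhibited as a $\Gamma$-isometric dilation, not merely a pair whose compression agrees with $(S,P)$. Your approach gives the unitary equivalence stated in the theorem, but does not by itself yield co-invariance. Either argument proves the theorem as stated; the paper's is shorter and avoids any series manipulation, while yours is more elementary in that it never needs to isolate the auxiliary identity $SD_{P^*}=D_{P^*}F_*^{*}+PD_{P^*}F_*$.
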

\begin{rem}
\textup{It is interesting to see here that the model space for a
pure $\Gamma$-contraction $(S,P)$ is same as that of $P$ and the
model operator for $P$ is the same given in Sz.-Nagy-Foias model.}
\end{rem}
To prove the above theorem, we define an operator $W$ in the
following way: \begin{align*} W: & \mathcal{H} \rightarrow
H^2(\mathbb D)\otimes \mathcal D_{P^*} \\& h\mapsto
\sum_{n=0}^{\infty} z^n\otimes D_{P^*}{P^*}^n
 h. \end{align*} It is obvious that $W$ embeds $\mathcal H$ isometrically
 inside $H^2(\mathbb D)\otimes \mathcal D_{P^*}$ (see proof of Theorem 4.6 of
 \cite{tirtha-sourav}) and its adjoint $L: H^2(\mathbb
D)\otimes \mathcal D_{P^*} \rightarrow \mathcal{H}$ is given by
 $$ L(f\otimes \xi)=f(P)D_{P^*}\xi, \quad \textup{for all } f \in \mathbb
C[z], \textup{ and } \xi \in \mathcal D_{P^*}. $$ Here we mention
an interesting and well-known property of the operator $L$ which
we use to prove the above theorem.
\begin{lem}\label{easylem}
 For a pure contraction $P$, the identity $$L^*L+M_{\Theta_P}M_{\Theta_P}^*=I_{H^2(\mathbb D)\otimes \mathcal
 D_{P^*}}$$ holds.
\end{lem}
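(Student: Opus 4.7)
The identity is a statement about two orthogonal projections on $H^2(\mathbb{D}) \otimes \mathcal{D}_{P^*}$, and my plan is to show that the ranges of $L^*L$ and of $I - M_{\Theta_P}M_{\Theta_P}^*$ coincide. First, since $W$ embeds $\mathcal{H}$ isometrically (as recalled just before the lemma), $L = W^*$ is a co-isometry, so $L^*L$ is the orthogonal projection onto $(\ker L)^{\perp}$. Second, purity of $P$ makes $P$ completely non-unitary, so its characteristic function $\Theta_P$ is inner; consequently $M_{\Theta_P}$ is an isometry and $M_{\Theta_P}M_{\Theta_P}^*$ is the orthogonal projection onto its (closed) range $M_{\Theta_P}(H^2(\mathbb{D}) \otimes \mathcal{D}_P)$. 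The lemma therefore reduces to proving
\[
\ker L \;=\; M_{\Theta_P}\bigl(H^2(\mathbb{D}) \otimes \mathcal{D}_P\bigr),
\]
equivalently $(\ker L)^{\perp} = \mathbb{H}_P$.

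For the inclusion $M_{\Theta_P}(H^2(\mathbb{D}) \otimes \mathcal{D}_P) \subseteq \ker L$, I would compute $L M_{\Theta_P}(z^k \otimes \eta)$ directly for $\eta \in \mathcal{D}_P$ and $k \ge 0$. Expanding the Neumann series $(I - zP^*)^{-1}$ in the definition \eqref{e0} of $\Theta_P$ yields
\[
M_{\Theta_P}(z^k \otimes \eta) \;=\; -\,z^k \otimes P\eta \;+\; \sum_{n=1}^{\infty} z^{k+n} \otimes D_{P^*}{P^*}^{n-1}D_P\eta .
\]
Applying $L$ termwise via $L(z^j \otimes \xi) = P^j D_{P^*}\xi$, and then using $D_{P^*} P = P D_P$ together with $D_{P^*}^2 = I - PP^*$, turns the resulting series into a telescoping sum with value zero. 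This shows $LM_{\Theta_P} = 0$, hence the inclusion.

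The reverse inclusion is part of the classical Sz.-Nagy--Foias model theorem for a pure contraction, already recalled at the beginning of this section: the map $W$ realises $\mathcal{H}$ isometrically as the subspace $\mathbb{H}_P = (H^2(\mathbb{D}) \otimes \mathcal{D}_{P^*}) \ominus M_{\Theta_P}(H^2(\mathbb{D}) \otimes \mathcal{D}_P)$. Therefore $L$ vanishes on $\mathbb{H}_P^{\perp}$ and is a unitary from $\mathbb{H}_P$ onto $\mathcal{H}$, forcing $\ker L = M_{\Theta_P}(H^2(\mathbb{D}) \otimes \mathcal{D}_P)$. Adding the two projections then gives $L^*L + M_{\Theta_P}M_{\Theta_P}^* = I$.

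The only substantive work is the telescoping computation in the middle paragraph; the rest is orthogonal-projection bookkeeping combined with the classical identification of $W\mathcal{H}$ with $\mathbb{H}_P$, which is the reason the author labels the statement as well-known.
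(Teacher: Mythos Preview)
Your argument is sound as a standalone proof, but it follows a different route from the paper's, and within the paper's own logical order it flirts with circularity.

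The paper never argues via projections. Instead it tests the operator $L^*L + M_{\Theta_P}M_{\Theta_P}^*$ on reproducing-kernel vectors $k_z\otimes\xi$, using Arveson's formula $L(k_z\otimes\xi)=(I-\bar z P)^{-1}D_{P^*}\xi$, the identity $M_{\Theta_P}^*(k_z\otimes\xi)=k_z\otimes\Theta_P(z)^*\xi$, and the classical relation
\[
I-\Theta_P(w)\Theta_P(z)^*=(1-w\bar z)\,D_{P^*}(I-wP^*)^{-1}(I-\bar z P)^{-1}D_{P^*}.
\]
Totality of the $k_z$'s then gives the identity directly. This computation never needs to know that $\Theta_P$ is inner or that either summand is a projection; it is a self-contained verification.

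Your projection approach is legitimate and the telescoping calculation showing $LM_{\Theta_P}=0$ is correct (the tail $P^{k+N+1}{P^*}^{N}D_P\eta\to 0$ uses purity). The delicate point is the \emph{reverse} inclusion. You invoke ``$W(\mathcal H)=\mathbb H_P$'' as part of the Sz.-Nagy--Foias theorem ``already recalled at the beginning of this section''. But what the section actually recalls is only that $P$ is unitarily equivalent to the compression of $M_z\otimes I$ to $\mathbb H_P$; it does not say that the \emph{specific} map $W$ realises that equivalence. In fact, the very first line of the proof of Theorem~\ref{modelthm} reads ``It is evident from Lemma~\ref{easylem} that $L^*(\mathcal H)=W(\mathcal H)=\mathbb H_P$'': in the paper's order, $W(\mathcal H)=\mathbb H_P$ is a \emph{consequence} of the lemma, not an input to it. So, placed inside this paper, your reverse-inclusion step would be circular. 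You can of course cite an independent source where that identification is proved without the kernel identity, but then you are essentially outsourcing the substance of the lemma; the paper's kernel computation has the advantage of being self-contained at this spot.
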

\begin{proof}
As observed by Arveson in the proof of Theorem 1.2 in
 \cite{arveson2}, the operator $L$ satisfies the identity $$L(k_z\otimes \xi)=(I-\bar z P)^{-1}D_{P^*}\xi \quad \mbox{for } z\in \mathbb D, \xi \in \mathcal
 D_{P^*},$$ where $k_z(w)=(1-\langle w,z \rangle)^{-1}$.
 Therefore, for $z,w$ in $\mathbb{D}$ and $\xi,\eta$ in $\mathcal
 D_{P^*}$, we obtain that
 \begin{align*}
 &\quad \langle (L^*L+M_{\Theta_P}M_{\Theta_P}^*)k_z\otimes \xi, k_w\otimes \eta
 \rangle\\&
 =\langle L(k_z\otimes \xi),L(k_w\otimes \eta) \rangle +\langle M_{\Theta_P}^*(k_z\otimes \xi), M_{\Theta_P}^*(k_w\otimes \eta)
 \rangle\\&
 =\langle(I-\bar{z}P)^{-1}D_{P^*}\xi,(I-\bar{w}P)^{-1}D_{P^*}\eta \rangle+\langle k_z\otimes \Theta_P(z)^*\xi,k_w\otimes \Theta_P(w)^*\eta
 \rangle\\&
 =\langle D_{P^*}(I-wP^*)^{-1}(I-\bar{z}P)^{-1}D_P^*\xi,\eta \rangle+\langle k_z,k_w \rangle\langle \Theta_P(w)\Theta_P(z)^*\xi,\eta
 \rangle\\&
 =\langle k_z\otimes \xi, k_w\otimes \eta \rangle.
 \end{align*}
 The last equality follows from the following well-known identity,
 $$ 1-\Theta_P(w)\Theta_P(z)^*=(1-w\bar{z})D_{P^*}(1-wP^*)^{-1}(1-\bar{z}P)^{-1}D_{P^*}, $$ where $\Theta_P$ is the characteristic function of $P$.
 Using the fact that the vectors $k_z$ forms a total
 set in $H^2(\mathbb{D})$, the assertion follows.
\end{proof}
\textit{Proof of Theorem} \ref{modelthm}. It is evident from Lemma
\ref{easylem} that $$ L^*(\mathcal H)=W(\mathcal H)=\mathbb
H_P=(H^2(\mathbb D)\otimes \mathcal D_{P^*})\ominus
M_{\Theta_P}(H^2(\mathbb D)\otimes \mathcal D_P).$$ Let $T=I
\otimes F_*^*+M_z\otimes F_*$ and $V=M_z\otimes I$. For a basis
vector $z^n\otimes \xi$ of $H^2(\mathbb D)\otimes \mathcal
D_{P^*}$ and $h\in \mathcal H$ we have
$$ \langle L(z^n\otimes \xi),h \rangle  = \langle z^n \otimes \xi ,
\displaystyle \sum_{k=0}^{\infty}{z^k \otimes {D}_{P^*}{P^*}^kh}
\rangle  = \langle \xi , {D}_{P^*}{P^*}^nh \rangle  = \langle P^n
{D}_{P^*}\xi , h \rangle. $$ This implies that
$$L(z^n \otimes \xi)=P^n {D}_{P^*} \xi, \quad
\textup{for}\; n=0,1,2,3,...$$ Therefore
$$ \langle L(M_z \otimes I)(z^n \otimes \xi), h \rangle = \langle
z^{n+1} \otimes \xi , \displaystyle \sum_{k=0}^{\infty}{z^k
\otimes {D}_{P^*}{P^*}^kh} \rangle  = \langle \xi,
{D}_{P^*}{P^*}^{n+1}h \rangle = \langle P^{n+1} {D}_{P^*} \xi,h
\rangle. $$ Consequently, $LV = PL$ on vectors of the form $z^n
\otimes \xi$ which span $H^2 \otimes {\mathcal D}_{P^*}$ and hence
$$LV = PL.$$ Therefore $V^*$ leaves the range of $L^*$ (isometric
copy of $\mathcal H$) invariant and
$V^*|_{L^*\mathcal{H}}=L^*P^*L$ which is the copy of the operator
$P^*$ on range of $L^*$. Also
\begin{align*} LT(z^n \otimes \xi)=L(I \otimes F_*^*+M_z \otimes
F_*)(z^n \otimes \xi) &=L(I \otimes F_*^*)(z^n \otimes \xi)+L(M_z
\otimes F_*)(z^n \otimes \xi)\\&=L(z^n \otimes F_*^*\xi)+L(z^{n+1}
\otimes F_* \xi)\\&=P^n {D}_{P^*}F_*^*\xi + P^{n+1}
{D}_{P^*}F_*\xi.
\end{align*} Again $SL(z^n \otimes \xi)=SP^n {D}_{P^*} \xi$.
Therefore for showing $LT=SL$, it is enough to show that
\begin{align*}
& P^n {D}_{P^*}F_*^*+P^{n+1} {D}_{P^*}F_*=SP^n {D}_{P^*}=P^nS {D}_{P^*}\\
& \mbox{i.e,}\; {D}_{P^*}F_*^*+P {D}_{P^*}F_*=S {D}_{P^*}.
\end{align*}
Let $H={D}_{P^*}F_*^*+P {D}_{P^*}F_*-S {D}_{P^*}$. Then $H$ is
defined from $\mathcal D_{P^*}\rightarrow \mathcal H$. Since $F_*$
is a solution of (\ref{e2}), we have \begin{align*} HD_{P^*}
&=D_{P^*}F_*^*D_{P^*}+PD_{P^*}F_*D_{P^*}-SD_{P^*}^2 \\ &
=(S-PS^*)+P(S^*-SP^*)-S(I-PP^*)\\&=0.
\end{align*} Hence $H=0$.
So we have
$${D}_{P^*}F_*^*+P{D}_{P^*}F_*=S{D}_{P^*}$$ and therefore
$$ L(I \otimes F_*^*+M_z \otimes F_*)=SL.$$ This shows
that $T^*$ leaves $L^*(\mathcal H)$ invariant as well as
$T^*|_{L^*(\mathcal H)}=L^*S^*L$. Thus $\mathbb H_P$ is
co-invariant under $I \otimes F_*^*+M_z\otimes F_*$
 and $M_z\otimes I$. Hence $\mathbb H_P$ is a model space and
 $P_{\mathbb H_P}(I\otimes {F_*}^*+M_z\otimes
{F_*})|_{\mathbb H_P}$ and $P_{\mathbb H_P}(M_z\otimes
I)|_{\mathbb H_P}$ are model
 operators for $S$ and $P$ respectively.
\qed

\section{A SET OF UNITARY INVARIANTS FOR PURE $\Gamma$-CONTRACTIONS}
The characteristic function of a contraction is a classical
complete unitary invariant devised by Sz.-Nagy and
Foias(\cite{nazy}). In \cite{popescu2}, Popescu gave the
characteristic function for an infinite sequence of non-commuting
operators. The same for a commuting contractive tuple of operators
was invented by Bhattacharyya, Eschmeier and Sarkar,
\cite{tirtha-joydeb}. Popescu's characteristic function for a
non-commuting tuple, when specialized to a commuting one, gives
the same function. Given two contractions $P$ and $P_1$ on Hilbert
spaces $\mathcal H$ and $\mathcal H_1$, the characteristic
functions of $P$ and $P_1$ are said to coincide if there are
unitary operators $\sigma:\mathcal D_P \rightarrow \mathcal
D_{P_1}$ and $\sigma_*:\mathcal D_{P^*}\rightarrow \mathcal
D_{P_1^*}$ such that the following diagram commutes for all $z\in
\mathbb D$: \setlength{\unitlength}{3mm}
 \begin{center}
 \begin{picture}(20,14)(0,0)
 \put(2,3){$ \mathcal D_{P_1}$} \put(10,3){$ \mathcal D_{P_1 ^*}$}
 \put(5.6,2.2){$ \Theta_{P_1} (z)$}
 \put(1.5,6.5){$ \sigma$} \put(11,6.5){$ \sigma_*$}
 \put(2,10){$ \mathcal D_{P}$} \put(10,10){$ \mathcal D_{P ^*}$}
 \put(5.6,11){$ \Theta_{P} (z)$}
 \put(3.5,3.5){ \vector(1,0){6}} \put(3.5,10.5){ \vector(1,0){6}}
 \put(2.4,9.2){ \vector(0,-1){5}} \put(10,9.2){ \vector(0,-1){5}}

 \end{picture}
 \end{center}

The following result is due to Sz.Nagy and Foias.
\begin{thm}
Two completely non-unitary contractions are unitarily equivalent
if and only if their characteristic functions coincide.
\end{thm}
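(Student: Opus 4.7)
The plan is to prove the two implications separately, using the Sz.-Nagy--Foias functional model as the bridge. Both directions hinge on translating the algebraic relation between $P$ and $P_1$ into an algebraic relation between the defect data, and conversely.

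For the forward direction, suppose $U\colon \mathcal H\to \mathcal H_1$ is a unitary with $UP=P_1U$. Then $UP^*=P_1^*U$, so $U(I-P^*P)=(I-P_1^*P_1)U$. Applying the continuous functional calculus to the nonnegative polynomial square roots of these operators, I obtain $UD_P=D_{P_1}U$ and, symmetrically, $UD_{P^*}=D_{P_1^*}U$. Hence $U$ restricts to unitary operators
\[
\sigma\colon \mathcal D_P\to \mathcal D_{P_1},\qquad \sigma_*\colon \mathcal D_{P^*}\to \mathcal D_{P_1^*}.
\]
Observing that $\Lambda_P=\Lambda_{P_1}$ (since $U$ intertwines $P^*$ and $P_1^*$), I plug the intertwinings into the defining formula (\ref{e0}) for $\Theta_P$ and verify
\[
\sigma_*\,\Theta_P(z)=\Theta_{P_1}(z)\,\sigma\qquad (z\in \Lambda_P).
\]
This is exactly the commuting diagram of coincidence.

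For the reverse direction, assume the characteristic functions coincide via $\sigma,\sigma_*$. I work with the Sz.-Nagy--Foias model (of which Theorem \ref{modelthm} recalls the pure case): a c.n.u.\ contraction $P$ is unitarily equivalent to the compression of $M_z\otimes I$ to the model space
\[
\mathbb H_P=\bigl(H^2(\mathbb D)\otimes \mathcal D_{P^*}\bigr)\ominus M_{\Theta_P}\bigl(H^2(\mathbb D)\otimes \mathcal D_P\bigr)
\]
(in the general c.n.u.\ case one appends the closure of $\overline{\Delta_P(H^2\otimes \mathcal D_P)}$ in $L^2(\Delta_P)$; the argument below is unchanged, with $\sigma$ inducing a unitary of the defect-function pieces as well). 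Form the unitaries $I_{H^2}\otimes \sigma_*$ and $I_{H^2}\otimes \sigma$. The pointwise identity $\Theta_{P_1}(z)\sigma=\sigma_*\Theta_P(z)$ means precisely that
\[
(I_{H^2}\otimes \sigma_*)\,M_{\Theta_P}=M_{\Theta_{P_1}}\,(I_{H^2}\otimes \sigma),
\]
so $I_{H^2}\otimes \sigma_*$ carries $M_{\Theta_P}(H^2\otimes \mathcal D_P)$ unitarily onto $M_{\Theta_{P_1}}(H^2\otimes \mathcal D_{P_1})$. Taking orthogonal complements, it restricts to a unitary $\tau\colon \mathbb H_P\to \mathbb H_{P_1}$. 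Since $I_{H^2}\otimes \sigma_*$ commutes with $M_z\otimes I$, the unitary $\tau$ intertwines the model operators, hence $P$ and $P_1$ are unitarily equivalent.

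The main obstacle is the reverse direction: one must be careful that the induced map between the model spaces is well defined and surjective, which is exactly what the commuting diagram buys, and one must verify that the projected intertwiner survives the compression. In the general (not necessarily pure) c.n.u.\ case, the additional technicality is handling the defect-function summand built from $\Delta_P=(I-\Theta_P^*\Theta_P)^{1/2}$, where again the coincidence relation is exactly what is needed to identify $\Delta_P$ and $\Delta_{P_1}$ up to the given unitaries. Everything else reduces to routine checks using the reproducing-kernel identity already exploited in Lemma \ref{easylem}.
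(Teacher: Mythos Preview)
The paper does not prove this theorem at all: it is stated as a classical result ``due to Sz.-Nagy and Foias'' and simply cited from \cite{nazy}. There is therefore nothing in the paper to compare your argument against.

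That said, your sketch is the standard Sz.-Nagy--Foias argument and is essentially correct. The forward direction is routine; your intertwining computations are fine. For the reverse direction you rightly point out that in the general completely non-unitary case the model space is not just $\mathbb H_P$ as written in Section~3 of the paper (which is only the pure case), but carries the additional summand built from $\Delta_P=(I-\Theta_P^*\Theta_P)^{1/2}$ inside $\overline{\Delta_P L^2(\mathbb T)\otimes \mathcal D_P}$, and that the coincidence relation $\sigma_*\Theta_P(z)=\Theta_{P_1}(z)\sigma$ (extended a.e.\ to $\mathbb T$) yields $\sigma^*\Delta_{P_1}\sigma=\Delta_P$, so $I\otimes\sigma$ identifies those summands as well. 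With that bookkeeping in place, the map $(I\otimes\sigma_*)\oplus(I\otimes\sigma)$ restricts to a unitary between the full model spaces and intertwines multiplication by $e^{it}$, hence the compressions. Your proposal already flags this as the main technical point, so there is no genuine gap.
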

Let $(S,P)$ and $(S_1,P_1)$ be two pure $\Gamma$-contractions on
Hilbert spaces $\mathcal H$ and $\mathcal H_1$ respectively. As we
mentioned in section 1, the complete unitary invariant that we
shall produce has two contents namely the equivalence of the
fundamental operators of $(S^*,P^*)$ and $(S_1^*,P_1^*)$ and the
coincidence of the characteristic functions of $P$ and $P_1$.
\begin{prop}\label{end-prop}
If two $\Gamma$-contractions $(S,P)$ and $(S_1,P_1)$ defined on
$\mathcal H$ and $\mathcal H_1$ respectively are unitarily
equivalent then so are their fundamental operators $F$ and $F_1$.
\end{prop}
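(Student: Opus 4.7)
The plan is to translate the unitary equivalence of $(S,P)$ and $(S_1,P_1)$ into a unitary identification of the defect spaces $\mathcal D_P$ and $\mathcal D_{P_1}$, transport the fundamental equation through this identification, and then invoke uniqueness of the fundamental operator (Theorem 4.2 of \cite{tirtha-sourav}) to conclude.

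First, let $U:\mathcal H\to\mathcal H_1$ be a unitary with $US=S_1U$ and $UP=P_1U$. Taking adjoints of the second relation gives $UP^*=P_1^*U$, so $U(I-P^*P)=(I-P_1^*P_1)U$. By the functional calculus for positive operators, this yields $UD_P=D_{P_1}U$. In particular, $U$ carries $\overline{\textup{Ran}}\,D_P=\mathcal D_P$ onto $\overline{\textup{Ran}}\,D_{P_1}=\mathcal D_{P_1}$, and the restriction
\[
U_0 := U\big|_{\mathcal D_P}:\mathcal D_P\longrightarrow \mathcal D_{P_1}
\]
is a unitary operator.

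Next, I conjugate the defining fundamental equation $S-S^*P=D_PFD_P$ by $U$. Using $US=S_1U$, $US^*=S_1^*U$, $UP=P_1U$ and $UD_P=D_{P_1}U$ (hence $UD_PU^*=D_{P_1}$), I get
\[
D_{P_1}(U_0 F U_0^*)D_{P_1} \;=\; U(D_PFD_P)U^* \;=\; U(S-S^*P)U^* \;=\; S_1-S_1^*P_1.
\]
Thus $U_0FU_0^*\in\mathcal B(\mathcal D_{P_1})$ is a solution of the fundamental equation for the $\Gamma$-contraction $(S_1,P_1)$. By the uniqueness part of Theorem 4.2 of \cite{tirtha-sourav}, $U_0FU_0^*=F_1$, i.e., $F$ and $F_1$ are unitarily equivalent via $U_0$.

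There is no substantive obstacle here; the only point to be careful about is bookkeeping on the domains, namely that $F$ and $F_1$ act on the possibly different subspaces $\mathcal D_P\subset\mathcal H$ and $\mathcal D_{P_1}\subset\mathcal H_1$, and that the implementing unitary is the \emph{restriction} $U_0$ rather than $U$ itself. Once one checks that $U$ intertwines the defect operators, the remainder is a direct conjugation argument coupled with the uniqueness statement already available in the paper.
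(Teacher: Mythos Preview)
Your proof is correct and follows essentially the same route as the paper: both restrict the implementing unitary $U$ to $\mathcal D_P$ (the paper calls it $V$, you call it $U_0$), verify $UD_P=D_{P_1}U$ via $UD_P^2=D_{P_1}^2U$, conjugate the fundamental equation to obtain $D_{P_1}(U_0FU_0^*)D_{P_1}=S_1-S_1^*P_1$, and then conclude $F_1=U_0FU_0^*$ by uniqueness.
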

\begin{proof}
Let $U:\mathcal H \rightarrow \mathcal H_1$ be a unitary such that
$US=S_1U$ and $UP=P_1U$. Then clearly $UP^*=P_1^*U$ and
consequently
$$U{D}_P^2=U(I-P^*P)=(U-P_1^*UP)=(U-P_1^*P_1U)={D}_{P_1}^2U,$$
which implies that $U{D}_P={D}_{P_1}U$. Let $V=U|_{\mathcal
{D}_P}$. Then $V\in \mathcal B(\mathcal D_P, \mathcal D_{P_1})$
and $VD_P=D_{P_1}V$. Now
\begin{align*}
D_{P_1}VFV^*D_{P_1} & = VD_PFD_PV^* \\& = V(S-S^*P)V^* \\& =
S_1-S_1^*P_1 \\& = D_{P_1}F_1D_{P_1}.
\end{align*}
Thus $F_1=VFV^*$ and the proof is complete.
\end{proof}

The next result is a partial converse to the previous proposition
for pure $\Gamma$-contractions.

\begin{prop}\label{end-prop1}
Let $(S,P)$ and $(S_1,P_1)$ be two pure $\Gamma$-contractions on
$\mathcal{H}$ and $\mathcal H_1$ respectively such that the
characteristic functions of $P$ and $P_1$ coincide. Also suppose
that the fundamental operators $F_*$ of $(S^*,P^*)$ and ${F_1}_*$
of $(S_1^*,P_1^*)$ are unitarily equivalent by the unitary from
$\mathcal D_{P^*}$ and $\mathcal D_{P_1^*}$ that establishes the
coincidence of the characteristic functions of $P$ and $P_1$. Then
$(S,P)$ and $(S_1,P_1)$ are unitarily equivalent.
\end{prop}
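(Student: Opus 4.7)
The plan is to use the functional model of Theorem \ref{modelthm} to reduce the problem to showing that two concretely described model pairs are unitarily equivalent through a single natural unitary built from the data supplied by the hypothesis. By Theorem \ref{modelthm}, $(S,P)$ is unitarily equivalent to $(\tilde S,\tilde P)$ on $\mathbb H_P$, where $\tilde S = P_{\mathbb H_P}(I\otimes F_*^*+M_z\otimes F_*)|_{\mathbb H_P}$ and $\tilde P=P_{\mathbb H_P}(M_z\otimes I)|_{\mathbb H_P}$, and similarly $(S_1,P_1)$ is unitarily equivalent to $(\tilde S_1,\tilde P_1)$ on $\mathbb H_{P_1}$. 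Thus it suffices to produce a unitary $\mathbb H_P\to \mathbb H_{P_1}$ intertwining these two model pairs.

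Let $\sigma:\mathcal D_P\to\mathcal D_{P_1}$ and $\sigma_*:\mathcal D_{P^*}\to\mathcal D_{P_1^*}$ be the unitaries witnessing the coincidence of characteristic functions, chosen (as the parallel with Proposition \ref{end-prop} suggests is the intended reading) so that $\sigma_*$ additionally implements the unitary equivalence of the fundamental operators, i.e.\ $\sigma_* F_*=(F_1)_*\sigma_*$. Define the unitary $\tau=I_{H^2(\mathbb D)}\otimes \sigma_*$ from $H^2(\mathbb D)\otimes\mathcal D_{P^*}$ onto $H^2(\mathbb D)\otimes\mathcal D_{P_1^*}$. The coincidence identity $\sigma_*\Theta_P(z)=\Theta_{P_1}(z)\sigma$ for all $z\in\mathbb D$ upgrades to $\tau M_{\Theta_P}=M_{\Theta_{P_1}}(I\otimes\sigma)$, so $\tau$ carries $M_{\Theta_P}(H^2(\mathbb D)\otimes \mathcal D_P)$ onto $M_{\Theta_{P_1}}(H^2(\mathbb D)\otimes \mathcal D_{P_1})$, and therefore sends the orthocomplement $\mathbb H_P$ onto $\mathbb H_{P_1}$. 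Denote the restriction by $\tau_0:\mathbb H_P\to\mathbb H_{P_1}$, a unitary.

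It remains to check that $\tau$ intertwines the relevant operators on the ambient spaces, which after compression will give the desired equivalence of the model pairs. Clearly $\tau(M_z\otimes I)=(M_z\otimes I)\tau$, so compressing yields $\tau_0\tilde P=\tilde P_1\tau_0$. For the $S$-part, using $\sigma_* F_*=(F_1)_*\sigma_*$ together with its adjoint $\sigma_* F_*^*=(F_1)_*^*\sigma_*$, one obtains
\[
\tau\bigl(I\otimes F_*^*+M_z\otimes F_*\bigr)=\bigl(I\otimes (F_1)_*^*+M_z\otimes (F_1)_*\bigr)\tau,
\]
and compressing to $\mathbb H_P,\mathbb H_{P_1}$ gives $\tau_0\tilde S=\tilde S_1\tau_0$. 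Composing with the model unitaries from Theorem \ref{modelthm} on both sides produces a unitary $\mathcal H\to\mathcal H_1$ that intertwines $(S,P)$ with $(S_1,P_1)$.

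The main obstacle is not the calculation itself, which is essentially routine once $\tau$ is in hand, but the extraction of the correct $\sigma_*$: one must use the hypothesis that $F_*\sim (F_1)_*$ in such a way that the implementing unitary is the very same $\sigma_*$ appearing in the characteristic coincidence diagram, so that $I\otimes \sigma_*$ simultaneously (i) sends $\mathbb H_P$ to $\mathbb H_{P_1}$ via the $\Theta$-intertwining and (ii) intertwines the $F_*$-built parts of the model $S$-operators. Without this compatible choice, $\tau$ would intertwine $\tilde P$ with $\tilde P_1$ but not $\tilde S$ with $\tilde S_1$, and the proof would collapse; with it, the remainder is formal.
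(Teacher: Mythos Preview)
Your calculation is essentially the same as the paper's once the intertwining unitary is fixed, but there is a genuine gap at exactly the point you yourself flag as the ``main obstacle.'' You assume that one can choose the unitary $\sigma_*:\mathcal D_{P^*}\to\mathcal D_{P_1^*}$ from the coincidence diagram so that it \emph{also} satisfies $\sigma_*F_*={F_1}_*\sigma_*$. Nothing in the hypothesis guarantees this: the statement gives two independent pieces of data --- some unitary implementing $F_*\cong{F_1}_*$, and some (a priori unrelated) pair $(\sigma,\sigma_*)$ making the $\Theta$-diagram commute --- and you never show these can be aligned. Calling it ``the intended reading'' does not close the gap; as written, your argument proves a strictly stronger proposition than the one stated.

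The paper sidesteps this issue rather than resolving it. It takes $\eta_1:\mathcal D_{P^*}\to\mathcal D_{P_1^*}$ to be \emph{only} the unitary intertwining $F_*$ and ${F_1}_*$, forms $\eta=I\otimes\eta_1$, and sets $\mathcal N=\eta(\mathbb H_P)$. There is no claim that $\mathcal N=\mathbb H_{P_1}$. Instead, the paper observes that $\mathcal N$ is co-invariant under $M_z\otimes I_{\mathcal D_{P_1^*}}$ and that the compression of $M_z\otimes I$ to $\mathcal N$ is unitarily equivalent to $P$; then the \emph{separate} hypothesis that the characteristic functions coincide (hence $P\cong P_1$ by Sz.-Nagy--Foias) is invoked to conclude that $\mathcal N$ serves as a model space for $P_1$ as well, with $S_1$ realized as the compression of $I\otimes{F_1}_*^*+M_z\otimes{F_1}_*$ to $\mathcal N$. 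The intertwining computation for the $S$-parts is then the same as yours. In short: the paper uses the $F_*$-unitary to build the map and the $\Theta$-coincidence only after the fact, whereas you try to load both roles onto a single $\sigma_*$ up front, which is not justified.
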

\begin{proof}
Let $\mu_1\,:\,\mathcal D_P \rightarrow \mathcal D_{P_1}$ and
$\eta_1\,:\, \mathcal D_{P^*} \rightarrow \mathcal D_{P_1^*}$ be
unitaries such that the following diagram

\setlength{\unitlength}{3mm}
 \begin{center}
 \begin{picture}(20,14)(0,0)
 \put(2,3){$ \mathcal D_{P_1}$} \put(10,3){$ \mathcal D_{P_1 ^*}$}
 \put(5.6,2.2){$ \Theta_{P_1} (z)$}
 \put(1.5,6.5){$ \mu_1$} \put(11,6.5){$ \eta_1$}
 \put(2,10){$ \mathcal D_{P}$} \put(10,10){$ \mathcal D_{P ^*}$}
 \put(5.6,11){$ \Theta_{P} (z)$}
 \put(3.5,3.5){ \vector(1,0){6}} \put(3.5,10.5){ \vector(1,0){6}}
 \put(2.4,9.2){ \vector(0,-1){5}} \put(10,9.2){ \vector(0,-1){5}}

 \end{picture}
 \end{center}
 commutes for all $z\in \mathbb D$ and $\eta_1 F_*={F_1}_*
 \eta_1$. Let us define
$$\eta=(I\otimes\eta_1):H^2(\mathbb D)\otimes\mathcal{D}_{P^*}\rightarrow H^2(\mathbb D)\otimes
\mathcal{D}_{P_1^*}.$$ Since $\eta_1 \Theta_P=\Theta_{P_1}\mu_1$,
we have for any $f\in H^2(\mathbb D)\otimes\mathcal D_P$
$$ \eta (\textup{Ran} M_{\Theta_p}f)=\eta_1\Theta_P f=\Theta_{P_1}\mu_1f=M_{\Theta_{P_1}}(\mu_1 f). $$
Therefore,
$$ \eta(\mathbb{H}_P)=\mathbb{H}_{P_1},\; \textup{ as } \mathbb{H}_P=\textup{Ran}(M_{\Theta_P})^{\perp}
\textup{ and }
\mathbb{H}_{P_1}=\textup{Ran}(M_{\Theta_{P_1}})^{\perp}.
$$

Now clearly $$\eta(M_z \otimes I_{\mathcal{D}_{P^*}})^*=(M_z
\otimes I_{\mathcal{D}_{P_1^*}})^* \eta ,$$ which shows that
$\eta(\mathbb{H}_P)$ i.e, $\mathbb{H}_{P_1}$ is co-invariant under
$M_z \otimes I_{\mathcal{D}_{P_1^*}}$ and $P_{\mathbb{H}_P}(M_z
\otimes I_{\mathcal{D}_{P^*}})|_{\mathbb{H}_P}$ coincides with
$P_{\mathbb{H}_{P_1}}(M_z \otimes
I_{\mathcal{D}_{P_1^*}})|_{\mathbb{H}_{P_1}}$, i.e, $P$ defined on
$\mathcal H$ coincides with $P_1$ defined on $\mathcal H_1$.\\
Again
\begin{align*}
\eta(I \otimes {F_*}^*+ M_z\otimes F_*)^* & = \eta(I\otimes F_*+
M_z^*\otimes {F_*}^*)\\&=I\otimes \eta_1 F_*+ M_z^* \otimes \eta_1
{F}_*^*\\&= I\otimes {F_1}_* \eta_1 + M_z^* \otimes
{F_1}_*^*\eta_1
\\&= (I\otimes {F_1}_* +M_z^*\otimes {F_1}_*^*)(I\otimes
\eta_1)\\&=(I\otimes {F_1}_*^*+M_z\otimes {F_1}_*)^*(I\otimes
\eta_1),
\end{align*}
which shows that $S(\equiv P_{\mathbb{H}_P}(I\otimes
{F}_*^*+M_z\otimes F_*)|_{\mathbb{H}_P})$ and $S_1(\equiv
P_{\mathbb{H}_{P_1}}(I \otimes {F_1}_*^*+M_z\otimes
{F_1}_*)|_{\mathbb{H}_{P_1}})$ are unitarily equivalent. Hence
$(S,P)$ and $(S_1,P_1)$ are also unitarily equivalent and the
proof is complete.
\end{proof}
Combining the last two propositions we obtain the main result of
this section.
\begin{thm}
Let $(S,P)$ and $(S_1,P_1)$ be two pure $\Gamma$-contractions on
Hilbert spaces $\mathcal H$ and $\mathcal H_1$ respectively and
let $F_*$ and ${F_1}_*$ be the fundamental operators of
$(S^*,P^*)$ and $(S_1^*,P_1^*)$. Then $(S,P)$ is unitarily
equivalent to $(S_1,P_1)$ if and only if the characteristic
functions of $P$ and $P_1$ coincide and $F_*$ and ${F_1}_*$ are
unitarily equivalent by the unitary from $\mathcal D_{P^*}$ and
$\mathcal D_{P_1^*}$ that establishes the coincidence of the
characteristic functions of $P$ and $P_1$.
\end{thm}
\begin{proof}
Since $(S,P)$ and $(S_1,P_1)$ are unitarily equivalent, so are
$(S^*,P^*)$ and $(S_1^*,P_1^*)$. Now we apply Proposition
\ref{end-prop} to the $\Gamma$-contractions $(S^*,P^*)$ and
$(S_1^*,P_1^*)$ to have the unitary equivalence of $F_*$ and
${F_1}_*$.
\end{proof}

% This environment is for acknowledgements
\textbf{Acknowledgements.} The first author was supported by the
Department of Science and Technology, India through the project
numbered SR/S4/MS:766/12, University Grants Commission, India via
DSA-SAP and UKIERI. and the second author was supported by
Research Fellowship of Council of Science and Industrial Research,
India and UKIERI.

% We give three examples on how to type the bibliographical items, for
% articles, books, and dissertations.

\end{document}